\newtheorem{thm}{Theorem}[section]
\newtheorem{prop}[thm]{Proposition}
\newcommand{\epn}{\ensuremath{\mathrm{epn}}}
\newcommand{\ipn}{\ensuremath{\mathrm{ipn}}}
\begin{document}

\title{Secure Total Domination Number in Maximal Outerplanar Graphs}

\author{Yasufumi Aita \and Toru Araki}
\date{Gunma University}
\maketitle

\begin{abstract}
  A subset $S$ of vertices in a graph $G$ is a secure total dominating
  set of $G$ if $S$ is a total dominating set of $G$ and, for each
  vertex $u \not\in S$, there is a vertex $v \in S$ such that $uv$ is
  an edge and $(S \setminus \{v\}) \cup \{u\}$ is also a total
  dominating set of $G$.
  We show that if $G$ is a maximal outerplanar graph of order $n$,
  then $G$ has a total secure dominating set of size at most
  $\lfloor 2n/3 \rfloor$.
  Moreover, if an outerplanar graph $G$ of order $n$, then each
  secure total dominating set has at least $\lceil (n+2)/3 \rceil$
  vertices.
  We show that these bounds are best possible.

  \vspace{1em} Secure total domination, total domination, maximal
  outerplanar graphs, upper bound, lower bound.
\end{abstract}

\section{Introduction}
\label{sec:introduction}

We consider finite undirected graph $G$ with vertex set $V(G)$ and
edge set $E(G)$ without self-loops.
The \emph{open neighborhood} of a vertex $v \in V(G)$ is defined by
$N_{G}(v) = \{u \mid vu \in E(G) \}$, and the \emph{closed
  neighborhood} of $v$ is $N_{G}[v]=N_{G}(u) \cup \{v\}$.
We denote by $\deg_{G} v=|N_{G}(v)|$ the degree of $v$.
For a subset $U \subseteq V(G)$, the subgraph induced by $U$ is
denoted by $G[U]$.
For a proper subset $U \subset V(G)$, we denote by $G-U$ the graph
obtained by removing vertices in $U$ and their incident edges from
$G$.
A vertex $x$ is a \emph{cut-vertex} of a connected graph $G$ if
$G-\{v\}$ is disconnected.
A \emph{block} of a connected graph is a maximal subgraph
that has no cut-vertices.
Any terminology not defined here, we refer to~\cite{chartrand11}.

% A vertex $v$ is said to \emph{dominate} itself and each vertex in
% $N_{G}(v)$, that is, $v$ dominates the vertices in $N_{G}[v]$.
A set $S \subseteq V(G)$ is a \emph{dominating set} of $G$ if each
vertex $u \in V(G) \setminus S$ is adjacent to some vertex in $S$.
The \emph{domination number} $\gamma(G)$ of $G$ is the smallest
cardinality of a dominating set of $G$.
A set $S \subseteq V(G)$ is a \emph{total dominating set} of $G$ if
every vertex is adjacent to some vertex in $S$.
The \emph{total domination number} $\gamma_{t}(G)$ of $G$ is the smallest
cardinality of a total dominating set of $G$.

% Let $S$ be a dominating set of $G$.
% A vertex $v \in S$ is said to \emph{defend} $u \not\in S$
% if $uv \in E(G)$ and $S' =(S \setminus \{v\}) \cup \{u\}$ is also a
% dominating set.
Recently, secure domination and secure total domination were
introduced in~\cite{benecke07:_secur,cockayne08:_protec}.
A dominating set $S$ is a \emph{secure dominating set} (or an
\emph{SDS}) if, for any $u \in V(G) \setminus S$, there exists a
vertex $v \in S$ such that $uv \in E(G)$ and $(S \setminus \{v\}) \cup
\{u\}$ is also a dominating set.
The \emph{secure domination number} $\gamma_{s}(G)$ of $G$ is the
smallest cardinality of a secure dominating set of $G$.
A total dominating set $S$ is a \emph{secure total dominating set} (or
an \emph{STDS}) if, for every $u \in V(G) \setminus S$, there exists a
vertex $v \in S$ such that $uv \in E(G)$ and $(S \setminus \{v\}) \cup
\{u\}$ is also a total dominating set.
The \emph{secure total domination number} $\gamma_{st}(G)$ of $G$ is the
smallest cardinality of a secure total dominating set of $G$.

% The notion of secure domination has been researched extensively.
% Cockayne et al.~\cite{cockayne08:_protec} investigated some
% fundamental properties of an SDS, and obtained exact values of
% $\gamma_{s}(G)$ for some graph classes, such as paths, cycles,
% complete multipartite graphs.

Various aspects of secure domination and secure total
domination have been researched~\cite{burger16,merouane15,klostermeyer08:_secur,burger08:_vertex,li17,martinez19}.
The secure total domination problem is NP-hard even when restricted to
chordal bipartite and split graphs~\cite{duginov17:_secur}.
Some polynomial-time algorithms for computing the secure domination
number of some restricted graph classes are investigated
~\cite{burger14,araki18:_secur,pradhan17,araki19:_secur,jha19,zou19}.

A graph $G$ is \emph{outerplanar} if it has a crossing-free embedding
in the plane such that all vertices belong to the boundary of its
outer face (the unbounded face).
A \emph{maximal outerplanar graph} (or just a \emph{mop}) is an
outerplanar graph such that the addition of a single edge results in a
graph that is not outerplanar.
Matheson and Tarjan~\cite{matheson96:_domin} proved a tight upper
bound for the domination number on the class of \emph{triangulated
  discs}: graphs that have an embedding in the plane such that all of
their faces are triangles, except possibly one.
They proved that $\gamma(G) \leq n/3$ for any $n$-vertex triangulated
disc.
For maximal outerplanar graphs, better upper bounds are obtained.
Campos and Wakabayashi~\cite{campos13} showed that if $G$ is a mop of
$n$ vertices, then $\gamma(G) \leq (n+k)/4$ where $k$ is the number of
vertices of degree 2.
Tokunaga proved the same result independently
in~\cite{tokunaga13:_domin}.
Li et al.~improved the result by showing that $\gamma(G)
\leq (n+t)/4$, where $t$ is the number of pairs of consecutive
degree 2 vertices with distance at least 3 on the outer
cycle~\cite{li16}.
Dorfling et al.~\cite{dorfling16:_total2} proved that $\gamma_{t}(G)
\leq \lfloor 2n/5 \rfloor$, and then Lema\'{n}ska et
al.~\cite{lemanska17:_total} gave an alternative proof of it.
The second author~\cite{araki18} proved that $\gamma_{s}(G) \leq
\lceil 3n/7 \rceil$ and the upper bound is sharp.

In this paper, we give sharp upper and lower bounds for the secure
total domination number of maximal outerplanar graphs.
We prove that, for any maximal outerplanar graph $G$ of $n \geq 3$
vertices, $\lceil (n+2)/3 \rceil \leq \gamma_{st}(G) \leq \lfloor 2n/3
\rfloor$, and these bounds are sharp.
In Section~\ref{sec:structure-mop}, some properties of maximal
outerplanar graphs are described.
Then, in Section~\ref{sec:upperbound} and~\ref{sec:lower-bound}, we
establish upper and lower bounds for the secure total domination
number.


\section{Structure of maximal outerplanar graphs}
\label{sec:structure-mop}

In this section, we give some properties of maximal outerplanar
graphs.
O'Rourke~\cite{orourke87} pointed out that every mop has a unique
Hamiltonian cycle.
Thus, the Hamiltonian cycle is the boundary of the mop.
The Hamiltonian cycle of a mop $G$ is denoted by $C(G)$.

\begin{prop}[\cite{li16}]
  \label{prop:deg2}
  Every maximal outerplanar graph contains at least two vertices of
  degree 2.
\end{prop}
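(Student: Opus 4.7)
The plan is to argue via the weak dual of the mop. Assume $G$ is a maximal outerplanar graph with $n \geq 3$ vertices. Since $G$ is a mop, every bounded face of its outerplanar embedding is a triangle, and the outer face is bounded by the unique Hamiltonian cycle $C(G)$. Define the \emph{weak dual} $T$ of $G$ to be the graph whose vertices are the bounded (triangular) faces of $G$, with two nodes adjacent exactly when the corresponding triangles share an edge (necessarily a chord of $C(G)$).

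The first step is to verify that $T$ is a tree. There are $n-2$ triangles and $n-3$ chords, and one checks that $T$ is connected (any two triangles can be joined by a sequence of adjacent ones, since removing any chord disconnects the interior into two subpolygons that are themselves mops) and has no cycles (a cycle in $T$ would enclose an interior vertex of $G$, contradicting outerplanarity). Hence $T$ is a tree on $n-2$ nodes.

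If $n = 3$, then $G$ is a triangle and all three vertices have degree $2$, so we are done. If $n \geq 4$, then $T$ has at least two nodes, so it has at least two leaves. A leaf of $T$ corresponds to a triangular face whose three edges include exactly one chord of $C(G)$; the other two edges lie on $C(G)$, and the vertex incident to both of them has degree $2$ in $G$ (its only neighbors are its two cycle-neighbors, which are also the endpoints of the unique chord of the leaf triangle).

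The main thing to be careful about is that two distinct leaves of $T$ really produce two distinct degree-$2$ vertices of $G$. This follows because the degree-$2$ vertex supplied by a leaf triangle is uniquely determined by that triangle (it is the apex opposite the chord), and two different leaves are different triangles whose apex vertices are different. This gives the two required vertices of degree $2$ and completes the proof.
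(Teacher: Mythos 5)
Your argument is correct. Note that the paper itself gives no proof of this proposition --- it is stated as a known result cited from the reference of Li et al.\ --- so there is no in-paper argument to compare against; your weak-dual proof is the standard self-contained justification. The structure is sound: the weak dual of a mop on $n\geq 4$ vertices is a tree on $n-2$ nodes (connected with $n-3$ edges, one per chord), hence has at least two leaves, and each leaf is an ``ear'' triangle $uvw$ with exactly one chord $uw$ and apex $v$ lying between $u$ and $w$ on $C(G)$. The only step you assert rather than justify is that the apex really has degree exactly $2$: the reason is that the chord $uw$, together with the arc $u,v,w$ of the Hamiltonian cycle, bounds the face $uvw$, so any further edge at $v$ would have to cross $uw$ in the outerplanar embedding, which is impossible. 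With that one sentence added, the distinctness of the two resulting degree-$2$ vertices follows exactly as you say (a degree-$2$ vertex lies in only one triangle), and the proof is complete.
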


\begin{prop}[\cite{orourke87}]
  \label{thm:subgraphs}
  Let $G$ be a mop of $n \geq 5$ vertices.
  There are consecutive vertices in $C(G)$ that induce at least one
  of the following eight subgraphs which are illustrated in
  Fig~\ref{fig:eight_subgraphs}:
  \begin{itemize}
  \item[(a)] Subgraph induced by five consecutive vertices $u,v,w,x,y$
    such that $vx,ux,uy \in E(G)$ or the mirror image of it.
  \item[(b)] Subgraph induced by five consecutive vertices $u,v,w,x,y$
    such that $uw,ux,uy \in E(G)$ or the mirror image of it.
  \item[(c)] Subgraph induced by five consecutive vertices $u,v,w,x,y$
    such that $uw,wy,uy \in E(G)$.
  \item[(d)] Subgraph induced by six consecutive vertices
    $t,u,v,w,x,y$ such that $tw,uw,wy,ty \in E(G)$ or the mirror image
    of it.
  \item[(e)] Subgraph induced by six consecutive vertices
    $t,u,v,w,x,y$ such that $tv,tw,wy,ty \in E(G)$ or the mirror image
    of it.
  \item[(f)] Subgraph induced by seven consecutive vertices
    $t,u,v,w,x,y,z$ such that $tw,uw,wy,wz,tz \in E(G)$.
  \item[(g)] Subgraph induced by seven consecutive vertices
    $t,u,v,w,x,y,z$ such that $tw,uw,wz,xz,tz \in E(G)$ or the mirror
    image of it.
  \item[(h)] Subgraph induced by seven consecutive vertices
    $t,u,v,w,x,y,z$ such that $tv,tw,wz,xz,tz \in E(G)$.
  \end{itemize}
\end{prop}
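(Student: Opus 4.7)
The plan is to work with the weak dual tree $T(G)$ of the mop: its nodes are the $n-2$ interior triangular faces of $G$, and two nodes are adjacent iff the corresponding triangles share a chord of $G$. Since $G$ is a triangulated polygon, $T(G)$ is a tree on at least three nodes (using $n \geq 5$). A triangle is a leaf of $T(G)$ iff two of its edges lie on $C(G)$, and the vertex opposite the unique chord of such a leaf triangle is then a degree-2 vertex of $G$; conversely, every degree-2 vertex of $G$ gives a leaf of $T(G)$, so Proposition~\ref{prop:deg2} guarantees at least two leaves.

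Next I would fix a leaf $T_{1}$ of $T(G)$, say $T_{1}=\triangle uvw$ with $u,v,w$ consecutive on $C(G)$ and $v$ a degree-2 vertex, so that $uw$ is the only chord of $T_{1}$. Let $T_{2}=\triangle uwx$ be the unique neighbor of $T_{1}$ in $T(G)$. I would first argue that, since $n \geq 5$, $T_{2}$ cannot itself be a leaf: otherwise the cycle-neighbors of $x$ on $C(G)$ would have to be $u$ and $w$, forcing the cycle $C(G)$ to have only four vertices. Hence at least one of the edges $ux, wx$ of $T_{2}$ is a chord of $G$ shared with a further triangle $T_{3}$.

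The heart of the argument is then a case analysis on the location of $x$ relative to $u,w$ on $C(G)$, together with the shape of the triangles attached to $T_{2}$ (and sometimes $T_{3}$). I would organize the cases as follows. Writing $t$ for the cycle-predecessor of $u$ and $y$ for the cycle-successor of $w$: \textbf{(I)} $x=y$ (symmetrically $x=t$), so $wx$ is a boundary edge and the chord $ux$ must extend to some triangle $T_{3}=\triangle uxz$; depending on whether $z$ is the cycle-successor of $x$, the cycle-predecessor of $u$, or strictly further, one obtains the 5-vertex subgraphs (a), (b), or one of the 6-vertex subgraphs (d), (e). \textbf{(II)} $x \notin \{t,y\}$, so both $ux$ and $wx$ are chords; one then examines the triangles on the other sides of these two chords, which together with $T_{1},T_{2}$ produce the 5-vertex pattern (c) when both neighbors of $T_{2}$ are themselves leaves, and the 7-vertex patterns (f), (g), (h) otherwise, according to whether zero, one, or two of the extra apices coincide with the cycle-neighbors of $u$ and $w$. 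At each step, the edges forced by the triangulation exactly realize one of the eight induced subgraphs described in the statement.

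The main obstacle is not conceptual but bookkeeping: one must verify exhaustiveness (every combination of ``apex position + leaf/non-leaf status'' of $T_{2}$ and its further neighbors falls into one of (a)--(h)) and, in each case, verify that the induced subgraph on the chosen consecutive vertices contains no additional chord beyond those listed, which requires using that each interior face is a triangle and that chords of a mop do not cross. Degenerate situations where a vertex of $T_{3}$ coincides with $t$ or $y$ need careful handling so that the 7-vertex cases collapse into genuine 5- or 6-vertex cases rather than duplicating them.
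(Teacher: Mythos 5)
The paper itself offers no proof of this proposition; it is imported from the literature as a black box, so your attempt can only be judged on its own merits. It has a genuine gap, and it is located exactly where you wave at ``bookkeeping'': the leaf $T_{1}$ of the dual tree cannot be chosen arbitrarily. Once you pass to $T_{3}$ (and beyond), the apex may lie far from $u,v,w,x$ on $C(G)$, and then \emph{no} window of five, six or seven consecutive vertices containing your degree-2 vertex $v$ induces any of (a)--(h). Concretely, let $C(G)=v_{1}v_{2}\cdots v_{12}$ with chords $v_{1}v_{3}$, $v_{1}v_{4}$, $v_{1}v_{8}$, $v_{4}v_{6}$, $v_{4}v_{7}$, $v_{4}v_{8}$, $v_{8}v_{10}$, $v_{8}v_{11}$, $v_{8}v_{12}$. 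Starting from the leaf $\triangle v_{1}v_{2}v_{3}$ you get $T_{2}=\triangle v_{1}v_{3}v_{4}$ (your case (I)) and $T_{3}=\triangle v_{1}v_{4}v_{8}$, whose apex $v_{8}$ is ``strictly further''; your claim that this yields (d) or (e) is false here, and in fact every window of consecutive vertices containing $v_{2}$ has too few chords to induce any of the eight subgraphs. The proposition still holds for this mop, but only around the other degree-2 vertices $v_{5}$ and $v_{9}$; an argument anchored at the leaf $\triangle v_{1}v_{2}v_{3}$ can never close. The same problem infects your case (II): the regions cut off by the chords $ux$ and $wx$ can be arbitrarily deep, so their ``extra apices'' need not sit next to $t$ or $y$.

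The missing idea is an extremal choice of the leaf: take $T_{1}$ to be an endpoint of a \emph{longest} path $F_{1}F_{2}\cdots F_{m}$ in the dual tree. Then every subtree hanging off $F_{2}$ other than the one containing $F_{3}$ is a single leaf, and every subtree hanging off $F_{3}$ has depth at most two; this is precisely what forces all relevant apices to lie within one or two cycle-steps of $u$ and $w$, so that the configuration collapses into one of the eight patterns. With that modification your case analysis (position of the apex of $T_{2}$, then the shapes of the bounded-depth branches) can be carried out and is essentially the standard argument; without it, the induction on ``where the next apex sits'' does not terminate and the statement you are proving at each stage is simply not true.
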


\begin{figure}[tb]
  \centering
  \includegraphics[width=\textwidth]{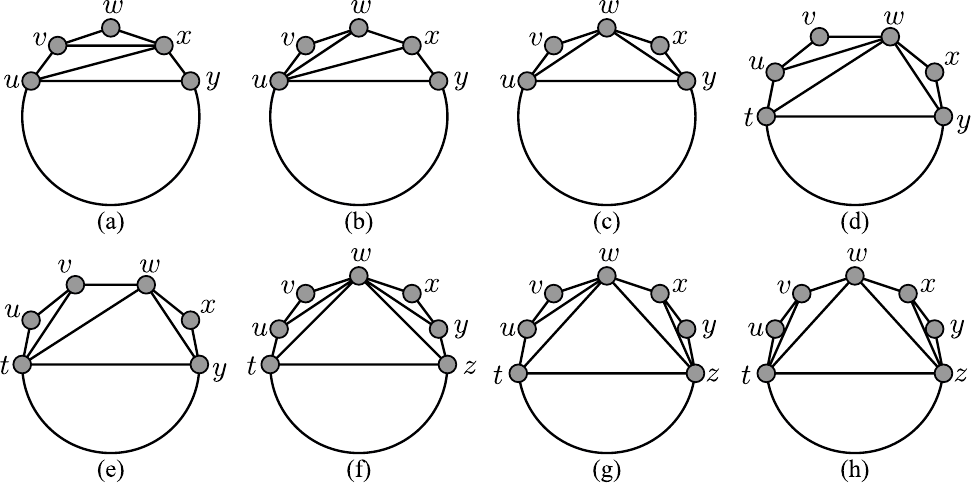}
  \caption{Eight induced subgraphs in a maximal outerplanar graph.}
  \label{fig:eight_subgraphs}
\end{figure}

% For example, in the mop of Fig.~\ref{fig:example_mop},
% \begin{itemize}
% \item vertices $a,b,c,d,e$ induce the subgraph isomorphic to
%   Fig.~\ref{fig:eight_subgraphs}(a),
% \item vertices $l,m,n,o,p$ induce the subgraph isomorphic to
%   Fig.~\ref{fig:eight_subgraphs}(b),
% \item vertices $g,h,i,j,k,l$ induce the subgraph isomorphic to
%   (the mirror image of) Fig.~\ref{fig:eight_subgraphs}(d).
% \end{itemize}

% \begin{figure}[tb]
%   \centering
%   \includegraphics{example_outerplanar.pdf}
%   \caption{An example of an outerplanar graph.}
%   \label{fig:example_mop}
% \end{figure}

A graph $H$ is a \emph{subdivision} of $G$ if either $G=H$ or $H$ can
be obtained from $G$ by inserting vertices of degree 2 into the edges
of $G$.
The following characterization of outerplanar graphs is known
(see~\cite{chartrand11}, for example).

\begin{thm}
  \label{thm:charact}
  A graph $G$ is outerplanar if and only if $G$ contains no subgraph
  that is a subdivision of the complete graph $K_{4}$ or the complete
  bipartite graph $K_{2,3}$.
\end{thm}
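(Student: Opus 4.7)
The plan is to reduce the statement to Kuratowski's theorem for planar graphs via the classical equivalence: $G$ is outerplanar if and only if $G^{*}$, the graph obtained from $G$ by adding a new vertex $v$ adjacent to every vertex of $G$, is planar. One direction of this equivalence is immediate by placing $v$ in the outer face of an outerplanar embedding; for the converse, I would, given any planar embedding of $G^{*}$, re-embed so that a face incident to $v$ becomes the outer face, and then deletion of $v$ yields an embedding of $G$ in which every vertex (being a neighbor of $v$) lies on the boundary of the outer face.

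For the necessity direction of the theorem, I would verify directly that neither $K_{4}$ nor $K_{2,3}$ admits an outerplanar embedding: in any planar embedding of $K_{4}$ one vertex is trapped inside a triangular face, and an analogous pigeonhole argument handles $K_{2,3}$. Since outerplanarity is plainly preserved under taking subgraphs and under both subdividing and suppressing degree-$2$ vertices, any graph containing a subdivision of $K_{4}$ or $K_{2,3}$ as a subgraph cannot itself be outerplanar.

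The substantive direction is sufficiency, which I would prove by contraposition. Assuming $G$ is not outerplanar, $G^{*}$ is non-planar, so Kuratowski supplies a subgraph $H$ of $G^{*}$ that is a subdivision of $K_{5}$ or $K_{3,3}$. I would then case-analyze the role of $v$ in $H$. If $v \notin V(H)$, then $H \subseteq G$ and we are done, since any $K_{5}$-subdivision contains a $K_{4}$-subdivision (on any four of the five branch vertices together with the six paths joining them) and any $K_{3,3}$-subdivision similarly contains a $K_{2,3}$-subdivision. If $v$ is a branch vertex of $H$, then deleting $v$ together with the subdivision paths incident to $v$ removes precisely one branch vertex and leaves a subdivision of $K_{4}$ (in the $K_{5}$ case) or $K_{2,3}$ (in the $K_{3,3}$ case) inside $G$. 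If $v$ is an internal vertex of a single subdivision path $P$, then deleting $v$ destroys only $P$ while leaving every other path intact, producing a subdivision of $K_{5}-e$ or $K_{3,3}-e$ in $G$; both of these graphs still contain a $K_{4}$- or $K_{2,3}$-subdivision, respectively.

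The main obstacle I expect is the careful justification of the equivalence $G$ outerplanar $\iff$ $G^{*}$ planar, in particular the step of re-embedding a given planar drawing of $G^{*}$ so that $v$ sits on the outer face (this is where one typically invokes a stereographic-style change of outer face, or equivalently Whitney's theorem on faces of $3$-connected planar graphs combined with a reduction to the $3$-connected case). Once this reduction is in hand, the three-case analysis above is essentially mechanical.
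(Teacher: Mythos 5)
The paper does not prove this statement at all: it is quoted as a known characterization of outerplanar graphs with a citation to Chartrand--Lesniak--Zhang, so there is no in-paper argument to compare yours against. Your proposal is the classical proof of that cited result (the reduction to Kuratowski's theorem via the apex graph $G^{*}=G+K_{1}$), and it is correct: the equivalence ``$G$ outerplanar $\iff G^{*}$ planar'' is justified exactly as you describe (the change of outer face needs only the sphere/stereographic-projection argument, not Whitney's theorem), the non-outerplanarity of $K_{4}$ and $K_{2,3}$ plus closure of outerplanarity under subgraphs and suppression of degree-$2$ vertices gives necessity, and your three cases for the location of $v$ in the Kuratowski subgraph $H$ are exhaustive, since every vertex of a subdivision is either a branch vertex or an internal vertex of exactly one branch path. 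The only cosmetic imprecision is in the third case: after deleting $v$ from the interior of a path $P$, what remains is a subdivision of $K_{5}-e$ (resp.\ $K_{3,3}-e$) together with two dangling path fragments, which you should explicitly discard before extracting the $K_{4}$- or $K_{2,3}$-subdivision; this does not affect correctness.
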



\section{Upper bound}
\label{sec:upperbound}

In this section, we show the next theorem.

\begin{thm}
  \label{thm:upper_bound}
  For any maximal outerplanar graph $G$ of $n \geq 3$ vertices,
  \begin{displaymath}
    \gamma_{st}(G) \leq \left\lfloor \frac{2n}{3} \right\rfloor.
  \end{displaymath}
\end{thm}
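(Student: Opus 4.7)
The plan is to prove the bound by strong induction on $n$, peeling off three vertices at a time using the eight-subgraph classification of Proposition~\ref{thm:subgraphs} and extending a secure total dominating set (STDS) from the resulting smaller mop by two vertices. Since $\lfloor 2(n-3)/3\rfloor + 2 = \lfloor 2n/3\rfloor$, any such reduction preserves the bound exactly.

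For the base cases $n\in\{3,4,5\}$ I would verify the inequality by direct inspection. The triangle requires $\gamma_{st}=2$; every mop of order $4$ admits an STDS of size $2$; and because every mop of order $5$ is a fan (so configurations (a), (b), (c) of Proposition~\ref{thm:subgraphs} describe the same graph from different viewpoints), a set such as $\{u,v,x\}$ gives an STDS of size $3$.

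For the inductive step with $n\ge 6$, I would identify one of the eight configurations $H$ on the Hamiltonian cycle $C(G)$ and remove three consecutive vertices $X\subset V(H)$ so that $G'=G-X$ is again a maximal outerplanar graph of order $n-3\ge 3$. For example, in case (a) with chords $vx,ux,uy$, the choice $X=\{v,w,x\}$ works because the chord $uy$ serves as a new Hamiltonian edge in $G'$; an inductive STDS $S'$ of $G'$, augmented with the adjacent pair $\{v,x\}$, totally dominates $X$ while preserving the total domination of $V(G')$. Analogous three-vertex removals can be prescribed for each of the other seven configurations, sometimes at the cost of splitting into subcases depending on which chord endpoints exist. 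For the seven-vertex configurations (f)--(h) an alternative is to remove six consecutive vertices and add four, which also respects the ratio $2/3$ and gives extra freedom when choosing defenders.

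The hardest part, and the reason the proof is structured as a long case analysis, is verifying the secure condition for the augmented set $S=S'\cup\{a,b\}$. For a vertex $z\in X\setminus S$ the defender must be taken from $\{a,b\}$, which is a purely local check. For a vertex $z\in V(G')\setminus S'$ adjacent to $X$ in $G$, however, its defender $v\in S'$ supplied by induction was only required to keep $N_{G'}(v)\setminus\{v\}$ dominated after the swap, whereas in $G$ the neighborhood $N_G(v)$ may include vertices of $X$ that must stay dominated once $v$ is exchanged with $z$. This forces a subcase split on whether the attachment vertices (for instance $u$ and $y$ in case (a)) belong to $S'$. The main obstacle is choosing $\{a,b\}$ uniformly enough that the secure condition survives in every subcase; when no three-vertex removal succeeds for a given configuration, I would switch to the six-vertex removal to gain the extra degrees of freedom needed for the defender argument.
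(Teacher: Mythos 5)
Your overall strategy --- strong induction via the eight-configuration classification of Proposition~\ref{thm:subgraphs}, deleting a few consecutive boundary vertices and augmenting the inductive STDS --- is the paper's strategy, and your treatment of configuration (a) is essentially correct as it stands. However, you have deferred the hardest part (``analogous three-vertex removals can be prescribed\dots sometimes at the cost of splitting into subcases''), and the idea that makes those cases close is missing from your plan: the paper does not induct on Theorem~\ref{thm:upper_bound} itself but on the stronger statement (Theorem~\ref{thm:upper_bound2}) that every mop of order $n\ge 4$ has an STDS of size at most $\lfloor 2n/3\rfloor$ \emph{containing no vertex of degree 2}. Since $\epn(x,S)=\emptyset$ for every $x$ in an STDS $S$ (Proposition~\ref{prop:stds_ch}), keeping a degree-2 vertex out of $S$ forces both of its neighbours into $S$; this is exactly the control over $S'$ near the deleted set that your ``subcase split on whether the attachment vertices belong to $S'$'' is trying to recover by hand. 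The gap is concrete in configuration (c): there $v$ and $x$ have degree 2 in $G$, so any STDS avoiding them must contain $u,w$ and $w,y$ respectively; with an uncontrolled $S'$ on $G-\{v,w,x\}$ in which $u,y\notin S'$, you would need to add at least three vertices, blowing the budget of two that a three-vertex deletion allows. The paper instead deletes only $\{v,x\}$, notes that $w$ then has degree 2 in $G'$ so the strengthened hypothesis yields $w\notin S'$ and $u,y\in S'$, and adds the single vertex $w$. Configurations (d)--(h) lean on the same device, and you never verify that your alternative (more subcases, or a six-for-four exchange) actually terminates for all of them.

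A second, related defect: without the degree-2 invariant your induction cannot bottom out where you put it. You reduce to mops of order $n-3\ge 3$, but the only STDS of the triangle consists of two degree-2 vertices, so the strengthened statement (which you would need to adopt to fix the first problem) fails at $n=3$; the paper therefore takes base cases $4\le n\le 6$ and starts the induction at $n\ge 7$, and in each case also checks that the vertices it adds have degree at least 3 in $G$ so that the invariant is preserved for the constructed set. As written, your proposal is a plausible outline of the right approach, but the secure-defence verification for six of the eight configurations is exactly where the work lies, and in the stated ``remove three, add two'' form it does not go through.
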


Let $S$ be a total dominating set of $G$.
For $u \notin S$ and $v \in S$, if $uv \in E(G)$ and
$(S \setminus \{v\}) \cup \{u\}$ is also a total dominating set of
$G$, we say that \emph{$v$ totally $S$-defends $u$}.
Hence $S$ is an STDS of $G$ if and only if, for any vertex $u \notin
S$, there exists a vertex $v \in S$ such that $v$ totally $S$-defends
$u$.

For a set $S \subseteq V(G)$, the \emph{external private neighborhood
  of $v \in S$ with respect to $S$} is defined by
\begin{displaymath}
  \epn(v,S) = \{w \mid w \notin S \text{ and } N(w) \cap S = \{v\}\}.
\end{displaymath}
The \emph{internal private neighborhood of $v \in S$ with respect to
  $S$} is defined by
\begin{displaymath}
  \ipn(v,S) = \{u \mid u \in S \text{ and } N(u) \cap S = \{v\}\}.
\end{displaymath}
The next proposition was given in~\cite{klostermeyer08:_secur}.

\begin{prop}[\cite{klostermeyer08:_secur}]
  \label{prop:stds_ch}
  Let $S$ be a total dominating set of $G$.
  A vertex $v \in S$ totally $S$-defends $u \notin S$ if and only if
  (1) $\epn(v,S) = \emptyset$, and (2) $\{v\} \cup \ipn(v,S) \subseteq
  N(u)$.
\end{prop}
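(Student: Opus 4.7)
The plan is to prove both directions by directly analyzing the swapped set $S' = (S \setminus \{v\}) \cup \{u\}$; by definition, $v$ totally $S$-defends $u$ precisely when $uv \in E(G)$ and $S'$ is a total dominating set of $G$. The only vertices whose $S$-neighborhoods change under the swap are those adjacent to $v$ or to $u$, so the natural bookkeeping is a partition of $V(G)$ into $\{v\}$, $\{u\}$, $\ipn(v, S)$, $S \setminus (\{v\} \cup \ipn(v, S))$, $\epn(v, S)$, and the rest of $V \setminus S$. The crux is that a vertex in $S \setminus \{v\}$ keeps a neighbor in $S \setminus \{v\}$ exactly when it lies outside $\ipn(v, S)$, and a vertex outside $S$ keeps a neighbor in $S \setminus \{v\}$ exactly when it lies outside $\epn(v, S)$.

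For the ($\Leftarrow$) direction, I would assume conditions (1) and (2) and verify that every vertex in each partition class has a neighbor in $S'$: namely $v$ via $u \in S'$ using $v \in N(u)$; $u$ via a neighbor in $S \setminus \{v\}$, which must exist since $\epn(v, S) = \emptyset$ forbids $N(u) \cap S = \{v\}$; each $w \in \ipn(v, S)$ via $u$ using condition (2); each other $w \in S \setminus \{v\}$ via its guaranteed neighbor in $S \setminus \{v\}$; and each remaining $w \notin S \cup \{u\}$ via a neighbor in $S \setminus \{v\}$, again by $\epn(v, S) = \emptyset$. This verifies $S'$ is a TDS, so $v$ totally $S$-defends $u$.

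For the ($\Rightarrow$) direction, I would assume $v$ totally $S$-defends $u$, so $uv \in E(G)$ and $S'$ is a TDS. The edge immediately yields $v \in N(u)$. Taking any $w \in \ipn(v, S)$, $w$ lies in $S'$ and has $N(w) \cap S' = N(w) \cap \{u\}$, which must be nonempty, forcing $w \in N(u)$ and hence $\ipn(v, S) \subseteq N(u)$, completing condition (2). For condition (1), I would argue by contradiction against the existence of any $w \in \epn(v, S)$: if $w = u$, then $u \in S'$ has no neighbor at all in $S'$; if $w \neq u$, then $w \notin S'$ has only $u$ as a candidate $S'$-neighbor, and the total-domination requirement applied at $u$ itself blocks this configuration, yielding the needed contradiction.

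The step I expect to be the main obstacle is the last one: extracting the full equality $\epn(v, S) = \emptyset$ rather than merely $\epn(v, S) \subseteq N(u) \cup \{u\}$, since it is precisely here that \emph{total} domination (as opposed to ordinary domination) enters nontrivially, through the requirement that the newly inserted vertex $u$ itself must have a neighbor inside $S'$. Once this point is handled, the remainder is straightforward case-by-case verification with no appeal to any structural property of $G$ beyond the definitions of $\ipn$, $\epn$, and total dominating set.
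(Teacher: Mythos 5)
The paper offers no proof of this proposition (it is quoted from Klostermeyer and Mynhardt), so your argument can only be judged on its own terms. Your ($\Leftarrow$) direction is correct and complete: conditions (1) and (2) do imply that $(S\setminus\{v\})\cup\{u\}$ is a total dominating set, and this sufficiency is the only part of the proposition the paper actually uses in its upper-bound constructions.

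The gap is exactly where you predicted it, and it cannot be closed, because the ($\Rightarrow$) direction of the statement as printed is false. In the subcase $w \in \epn(v,S)$ with $w \neq u$, the only requirement that total domination of $S'=(S\setminus\{v\})\cup\{u\}$ imposes on $w$ is that $u \in N(w)$; nothing ``applied at $u$ itself'' rules this configuration out, since $u$ may well have a second $S$-neighbor besides $v$. Concretely, take $V=\{v,u,w,a,b\}$ with edges $uv,vw,uw,ua,va,ab$ and $S=\{v,a,b\}$: then $S$ is a total dominating set, $S'=\{u,a,b\}$ is also a total dominating set, so $v$ totally $S$-defends $u$, yet $\epn(v,S)=\{w\}\neq\emptyset$. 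What your swap analysis actually establishes---and what the cited source states---is that $v$ totally $S$-defends $u$ if and only if $\{v\}\cup\epn(v,S)\cup\ipn(v,S)\subseteq N(u)$; condition (1) as written is sufficient but not necessary. (The case $w=u$ of your argument is the one place where the correct condition $\epn(v,S)\subseteq N(u)$ does force $u\notin\epn(v,S)$, which is also all the paper needs later to conclude that every STDS is a $2$-dominating set.)
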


% \begin{thm}
%   \label{prop:minstds}
%   For a maximal outerplanar graph $G$ of $n \geq 4$ vertices, there is
%   a $\gamma_{st}$-set $S$ such that $S$ has no vertex of degree 2 and
%   the two neighbors of the vertex of degree 2 are members in $S$.
% \end{thm}
% \begin{proof}
%   Assume that $S$ is a $\gamma_{st}$-set of a mop $G$, and also assume
%   that $\deg_{G}(v) = 2$ for some $v \in S$.
%   Let $N_{G}(v)=\{x,y\}$.
%   Since $S$ is a total dominating set, at least one of $x$ or $y$ is
%   in $S$.

%   We first assume that $x \in S$ and $y \notin S$.
%   In this case, clearly $S' = (S \setminus \{v\}) \cup \{y\}$ is an
%   STDS of $G$.

%   Next we assume that $x,y \in S$.
%   Assume that either $\ipn(x,S) = \emptyset$ or $\ipn(y,S) =
%   \emptyset$.
%   Without loss of generality, we may assume that $\ipn(x,S) =
%   \emptyset$.
%   Let $S' = S \setminus \{v\}$.
%   In this case, by Proposition~\ref{prop:stds_ch}, $x$ totally
%   $S$-defends $v$.
%   Hence $S'$ is a secure total dominating set of $G$, but this
%   contradicts the fact $S$ is a $\gamma_{st}$-set.
%   So $\ipn(x,S) \neq\emptyset$ and  $\ipn(y,S) \neq \emptyset$.
% \end{proof}

We prove the following theorem instead of Theorem~\ref{thm:upper_bound}.

\begin{thm}
  \label{thm:upper_bound2}
  Let $G$ be any maximal outerplanar graph $G$ of $n \geq 4$ vertices.
  Then, there exists a secure total dominating set $S$ of $G$ such
  that $|S| \leq \lfloor 2n/3 \rfloor$ and $S$ contains no vertex of
  degree 2.
\end{thm}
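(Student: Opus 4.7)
The plan is to prove the statement by strong induction on $n$. For the base cases $n \in \{4,5,6\}$, we enumerate the maximal outerplanar graphs on $n$ vertices up to isomorphism and exhibit for each an STDS of size at most $\lfloor 2n/3 \rfloor$ containing only vertices of degree at least $3$; verification is direct. For the inductive step with $n \geq 7$, apply Proposition~\ref{thm:subgraphs} to locate consecutive vertices in $C(G)$ inducing one of the eight subgraphs (a)--(h).

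In each case, remove a carefully chosen set $X$ from the configuration so that $G' := G - X$ is again a maximal outerplanar graph on $n - |X|$ vertices. Aim for $|X|=3$ in configurations (a)--(c) and $|X|=6$ in configurations (d)--(h), so the target bound $\lfloor 2n/3\rfloor$ drops by $2$ or $4$ respectively. Apply induction to obtain an STDS $S'$ of $G'$ with $|S'| \leq \lfloor 2(n-|X|)/3 \rfloor$ that contains no degree-$2$ vertex of $G'$, and then augment it by a set $Y$ of at most $2|X|/3$ vertices of $G$, each of degree at least $3$. The natural augmenters are the high-degree chord vertices of each configuration, namely $x$ in (a) and (b); one of $u,y$ in (c); $w$ in (d)--(f); and two of $\{t,w,z\}$ in (g), (h). These vertices typically dominate both the restored ear vertices and each other, which is essential to recover totality.

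The main obstacle is verifying that $S := S' \cup Y$ remains a \emph{secure} total dominating set. By Proposition~\ref{prop:stds_ch}, a defender $v \in S$ for $u \notin S$ requires $\epn(v,S) = \emptyset$ and $\{v\} \cup \ipn(v,S) \subseteq N(u)$. Reintroducing the vertices of $X$ can both create new external private neighbors for existing members of $S'$, thereby destroying their defending role in $G$, and introduce new vertices of $V(G)\setminus S$ needing defense. The case analysis must show, for each of the eight configurations, that every restored vertex of $X \setminus Y$ admits an $S$-defender and that every $S'$-defended vertex of $V(G')\setminus S'$ retains some defender in $G$. A secondary technical point is that a vertex $z \notin X$ may have degree $\geq 3$ in $G$ but degree $2$ in $G'$, forcing $z \notin S'$ by the inductive hypothesis; the configuration-by-configuration analysis must confirm that $Y$ still handles such boundary vertices. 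Working through all eight subgraphs with these constraints in mind forms the bulk of the proof.
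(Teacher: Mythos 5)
Your high-level skeleton (strong induction, base cases $4\le n\le 6$, locate one of the eight configurations, delete a few vertices, apply induction, augment) matches the paper's, but the specific parameters you commit to would make the proof fail in most cases. First, taking $|X|=6$ in configurations (d)--(h) is not viable: each of those configurations attaches to the rest of $G$ only through its two endpoint ``anchor'' vertices (e.g.\ $t,y$ in (d)--(e), $t,z$ in (f)--(h)), joined by a chord, and deleting six of the six or seven vertices necessarily deletes an anchor, after which $G-X$ is in general no longer a maximal outerplanar graph. The paper instead deletes only three \emph{interior} vertices in each of (d)--(h) (namely $\{u,v,x\}$ or $\{u,v,y\}$) and two in (c), always preserving the anchors. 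Second, the augmenting sets you name are too small. Any deleted vertex left outside $S$ must end up with at least two neighbors in $S$: otherwise it lies in $\epn(d,S)$ for its unique $S$-neighbor $d$, and by Proposition~\ref{prop:stds_ch} no vertex can then defend it. Adding only $x$ in (a) leaves $w$ (whose neighborhood is $\{v,x\}$) with one $S$-neighbor; adding only $x$ in (b) strands $v$; adding one of $u,y$ but not $w$ in (c) strands $v$ (whose neighborhood is $\{u,w\}$). The paper adds two vertices in (a), (b) and (d)--(h), chosen so that every deleted vertex outside $S$ acquires two $S$-neighbors.

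The third and most important miss is your treatment of the ``no degree-2 vertex'' clause. You frame it as a technical obstacle, but it is the engine of the argument: since an STDS avoiding a degree-2 vertex must contain \emph{both} of its neighbors, the clause applied to $G'$ forces the anchor vertices into $S'$ for free. For instance, in (c) the paper deletes only $\{v,x\}$, so that $w$ has degree $2$ in $G'$; the hypothesis then gives $w\notin S'$ and $u,y\in S'$, and $S'\cup\{w\}$ is an STDS of $G$ with the single added vertex $w$ defending both $v$ and $x$. The same mechanism pins down $t,y\in S'$ in (d)--(e) and $w,z\in S'$ in (f)--(h), which is exactly what makes two-vertex augmentations suffice. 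Without recognizing this, your case analysis cannot control which vertices of $S'$ are available as neighbors of the reinstated vertices, and the securing conditions of Proposition~\ref{prop:stds_ch} cannot be verified.
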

\begin{proof}
  It should be noted that, if an STDS $S$ does not contain a vertex
  $u$ of degree~2, then the two vertices adjacent to $u$ are in $S$
  since, for any vertex $x \in S$, we have $\epn(x,S)=\emptyset$ by
  Proposition~\ref{prop:stds_ch}.

  We prove this theorem by induction on $n$.
  For $4 \leq n \leq 6$, the proposition is true as illustrated in
  Fig~\ref{fig:minimumSTDS}.
  Each of the STDSs has at most $\lfloor 2n/3 \rfloor$ vertices and
  does not have a vertex of degree~2.

  \begin{figure}[tb]
    \centering
    \includegraphics[width=\textwidth]{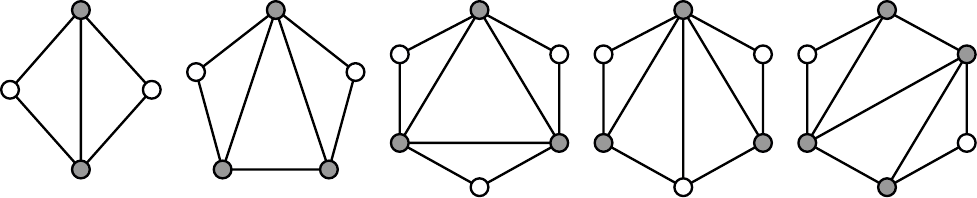}
    \caption{Minimum STDSs for maximal outerplanar graphs of
      $n \leq 6$ vertices.  The gray vertices are members of STDSs.}
    \label{fig:minimumSTDS}
  \end{figure}

  Let $G$ be a maximal outerplanar graph of $n \geq 7$ vertices.
  We assume that, for any $4 \leq n' \leq n-1$, the theorem is
  % true for any mop of $n'$ vertices.
  true for any mop of order $n'$.
  By Proposition~\ref{thm:subgraphs}, $G$ has a subgraph $H$ which is
  induced by consecutive vertices in $C(G)$ illustrated in
  Fig.~\ref{fig:eight_subgraphs}.
  Consider the following eight cases.

  \vspace{1em}
  \noindent
  \textbf{(Case a)} $H$ is Fig.~\ref{fig:eight_subgraphs}(a).
  Let $G' = G-\{v,w,x\}$.
  Then $G'$ is a mop of order $n-3$.
  By the induction hypothesis, $G'$ has an STDS $S'$ such that
  $|S'| \leq \lfloor 2(n-3)/3 \rfloor$.
  Then $S = S' \cup \{v,x\}$ is an STDS of $G$ since $v$ totally
  $S$-defends $w$, and
  $|S| = |S'|+2 \leq \lfloor 2(n-3)/3 \rfloor + 2 = \lfloor 2n/3
  \rfloor$.

  \vspace{1em}
  \noindent
  \textbf{(Case b)} $H$ is Fig.~\ref{fig:eight_subgraphs}(b).
  Let $G' = G-\{v,w,x\}$.
  Then $G'$ is a mop of $n-3$ vertices.
  By the induction hypothesis, $G'$ has an STDS $S'$ such that $|S'|
  \leq \lfloor 2(n-3)/3 \rfloor$.
  Consider the next two cases.

  \noindent
  (b-1) First we assume that $u \in S'$.
  In this case, let $S = S' \cup \{w,x\}$.
  Then $S$ is a total dominating set of $G$, and $w$ totally
  $S$-defends $v$, and $x$ totally $S$-defends $y$ when $y \notin S$.
  Hence $S$ is an STDS of $G$ and
  $|S| = |S'|+2 \leq \lfloor 2(n-3)/3 \rfloor + 2 = \lfloor 2n/3 \rfloor$.

  \noindent
  (b-2) Next we assume that $u \notin S'$.
  Then there is either $y \in S'$ or $y$ is totally $S'$-defended by some
  vertex $r \in S'$.
  In this case, let $S = S' \cup \{u,w\}$.
  Then $S$ is a total dominating set of $G$, and $w$ totally
  $S$-defends $v$ and $x$, and $y$ is totally $S$-defended by $r$.
  Hence $S$ is an STDS of $G$ and
  $|S| = |S'|+2 \leq \lfloor 2(n-3)/3 \rfloor + 2 = \lfloor 2n/3 \rfloor$.

  \vspace{1em}
  \noindent
  \textbf{(Case c)} $H$ is Fig.~\ref{fig:eight_subgraphs}(c).
  Let $G' = G-\{v,x\}$.
  Then $G'$ is a mop of $n-2$ vertices.
  By the induction hypothesis, $G'$ has an STDS $S'$ such that $|S'|
  \leq \lfloor 2(n-2)/3 \rfloor$ and $w \notin S'$ and $u,y \in S'$.
  Let $S = S' \cup \{w\}$.
  Then $S$ is an STDS of $G$ since $w$ totally $S$-defends $v$ and
  $x$, and $|S| = |S'|+1 \leq \lfloor 2(n-2)/3 \rfloor + 1 \leq \lfloor
  2n/3 \rfloor$.

  \vspace{1em}
  \noindent
  \textbf{(Case d)} $H$ is Fig.~\ref{fig:eight_subgraphs}(d).
  Let $G' = G-\{u,v,x\}$.
  Then $G'$ is a mop of $n-3$ vertices.
  By the induction hypothesis, $G'$ has an STDS $S'$ such that $|S'|
  \leq \lfloor 2(n-3)/3 \rfloor$ and $w \notin S'$ and $t,y \in S'$.
  Let $S = S' \cup \{u,w\}$.
  Then $S$ is an STDS of $G$ since $w$ totally $S$-defends $v$ and
  $x$, and $|S| = |S'|+2 \leq \lfloor 2(n-3)/3 \rfloor + 2 = \lfloor
  2n/3 \rfloor$.

  \vspace{1em}
  \noindent
  \textbf{(Case e)} $H$ is Fig.~\ref{fig:eight_subgraphs}(e).
  Let $G' = G-\{u,v,x\}$.
  Then $G'$ is a mop of $n-3$ vertices.
  By the induction hypothesis, $G'$ has an STDS $S'$ such that $|S'|
  \leq \lfloor 2(n-3)/3 \rfloor$ and $w \notin S'$ and $t,y \in S'$.
  Let $S = S' \cup \{v,w\}$.
  Then $S$ is an STDS of $G$ since $v$ and $w$ totally $S$-defend $u$
  and $x$, respectively, and $|S| = |S'|+2 \leq \lfloor 2(n-3)/3
  \rfloor + 2 = \lfloor 2n/3 \rfloor$.

  \vspace{1em}
  \noindent
  \textbf{(Case f)} $H$ is Fig.~\ref{fig:eight_subgraphs}(f).
  Let $G' = G-\{u,v,x\}$.
  Then $G'$ is a mop of $n-3$ vertices.
  By the induction hypothesis, $G'$ has an STDS $S'$ such that $|S'|
  \leq \lfloor 2(n-3)/3 \rfloor$ and $y \notin S'$ and $w,z \in S'$.
  Let $S = S' \cup \{u,y\}$.
  Then $S$ is an STDS of $G$ since $w$ totally $S$-defends $v,x$ and
  $t$, and $|S| = |S'|+2 \leq \lfloor 2(n-3)/3 \rfloor + 2 = \lfloor
  2n/3 \rfloor$.

  \vspace{1em}
  \noindent
  \textbf{(Case g)} $H$ is Fig.~\ref{fig:eight_subgraphs}(g).
  Let $G' = G-\{u,v,y\}$.
  Then $G'$ is a mop of $n-3$ vertices.
  By the induction hypothesis, $G'$ has an STDS $S'$ such that $|S'|
  \leq \lfloor 2(n-3)/3 \rfloor$ and $x \notin S$ and $w,z \in S$.
  Let $S = S' \cup \{u,x\}$.
  Then $S$ is an STDS of $G$ since $w$ totally $S$-defends $v$ and
  $t$, and $x$ totally $S$-defends $y$.
  We obtain
  $|S| = |S'|+2 \leq \lfloor 2(n-3)/3 \rfloor + 2 = \lfloor 2n/3
  \rfloor$.

  \vspace{1em}
  \noindent
  \textbf{(Case h)} $H$ is Fig.~\ref{fig:eight_subgraphs}(h).
  Let $G' = G-\{u,v,y\}$.
  Then $G'$ is a mop of $n-3$ vertices.
  By the induction hypothesis, $G'$ has an STDS $S'$ such that $|S'|
  \leq \lfloor 2(n-3)/3 \rfloor$ and $x \notin S'$ and $w,z \in S'$.
  Consider two cases below.

  \noindent
  (h-1) First assume that $t \in S'$.
  In this case, let $S = S' \cup \{v,x\}$.
  Then $S$ is an STDS of $G$ since $v$ totally $S$-defends $u$, and
  $x$ totally $S$-defends $y$, and $|S| = |S'|+2 \leq \lfloor
  2(n-3)/3  \rfloor + 2 = \lfloor 2n/3 \rfloor$.

  \noindent
  (h-2) Next we assume that $t \notin S'$.
  In this case, $w \in S'$ is contained in $\ipn(z, S')$.
  Hence, by Proposition~\ref{prop:stds_ch}, for any vertex $s \in (V
  \setminus S') \setminus \{t,x\}$, $z$ does not totally $S'$-defend
  $s$.
  Thus the vertex $s$ is totally $S'$-defended by some vertex other
  than $z$.
  Let $S = (S' \setminus \{w\}) \cup \{t,v,x\}$.
  Then $S$ is a total dominating set of $G$, and $x$ totally
  $S$-defends $y$ and $w$, and $v$ totally $S$-defends $u$.
  We have $|S| = (|S'|-1)+3 \leq \lfloor 2(n-3)/3  \rfloor + 2 =
  \lfloor 2n/3 \rfloor$.

  \vspace{1em}
  For each of the above eight cases, we obtain desired STDSs of $G$.
  Hence we complete the proof.
\end{proof}

If $n=3$, then clearly we obtain $\gamma_{st}(G)=2$.
Hence, Theorem~\ref{thm:upper_bound} follows from
Theorem~\ref{thm:upper_bound2}.
The upper bound in Theorem~\ref{thm:upper_bound} is best possible.
For any $k \geq 1$, let $H_{k}$ be a graph as follows.
\begin{align*}
  V(H_{k}) &= \{a_{1},a_{2},\dots,a_{k}\} \cup
             \{b_{1},b_{2},\dots,b_{k}\} \cup
             \{c_{1},c_{2},\dots,c_{k}\}, \\
  E(H_{k}) &= \{a_{i}b_{i}, b_{i}c_{i}, c_{i}a_{i} \mid i = 1,2,\dots,k\}
             \cup \{c_{i}a_{i+1} \mid i = 1,2,\dots,k-1\} \\
           &{} \cup \{a_{1}a_{i}, a_{1}c_{i} \mid i=2,3,\dots,k\}.
\end{align*}
$H_{k}$ has a mop of $n=3k$ vertices, and $\deg_{H_{k}}(b_{i}) = 2$ for
each $i=1,2,\dots,k$.
For example, $H_{4}$ is represented in Fig.~\ref{fig:upper}.
We can see that the set
$S=\{a_{1},c_{1},a_{2},c_{2},\dots,a_{k},c_{k}\}$ is an STDS of
$H_{k}$ and $|S|=\lfloor 2n/3 \rfloor = 2k$.
We show that $S$ is a minimum STDS of $H_{k}$.

\begin{figure}[tb]
  \centering
  \includegraphics{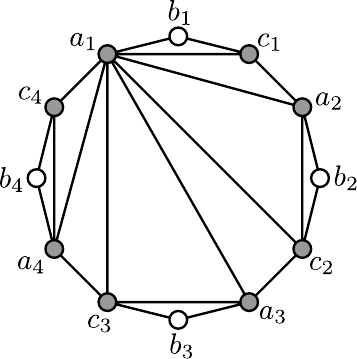}
  \caption{A maximal outerplanar graph $H_{k}$ when $k=4$.  The set of
    the gray vertices form a minimum STDS of $H_{k}$.}
  \label{fig:upper}
\end{figure}

Assume that there is an STDS $S'$ of $2k-1$ vertices.
Let $T_{i}=\{a_{i},b_{i},c_{i}\}$ for $k=1,2,\dots,k$.
By the pigeonhole principle, there is a subset $T_{j}$ such that
$|S' \cap T_{j}| \leq 1$.
If $S' \cap T_{j} = \emptyset$, then $S'$ is not a dominating set.
If $S' \cap T_{j} = \{b_{j}\}$, then $S'$ is not a total dominating
set since both of $a_{j}$ and $c_{j}$ are not in $S'$.
Finally, consider $S' \cap T_{j} = \{a_{j}\}$ or $\{c_{j}\}$.
Without loss of generality, we assume $S' \cap T_{j} = \{a_{j}\}$.
Since $b_{j},c_{j} \notin S'$, $b_{j} \in \epn(a_{j},S')$.
By Proposition~\ref{prop:stds_ch}, $a_{j}$ cannot totally $S'$-defend
$b_{j}$, and hence $S'$ is not an STDS.

From the above discussion, $S$ is an STDS of $G$, and
$\gamma_{st}(H_{k})=2k$.

The graph $H_{k}$ is order $3k$ and has $k$ vertices of degree 2.
The authors could not find an mop $G$ of order $n$ with
$\gamma_{st}(G)=\lfloor 2n/3 \rfloor$ and $k < \lfloor n/3 \rfloor$,
where $k$ is the number of vertices of degree 2.


\section{Lower bound}
\label{sec:lower-bound}

The purpose of this section is to show the next theorem.

\begin{thm}
  \label{thm:lower_bound}
  For an outerplanar graph $G$ of $n \geq 3$ vertices,
  \begin{displaymath}
    \gamma_{st}(G) \geq \left\lceil \frac{n+2}{3} \right\rceil.
  \end{displaymath}
\end{thm}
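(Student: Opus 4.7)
The plan is to prove the equivalent inequality $|V(G) \setminus S| \leq 2|S| - 2$ for every secure total dominating set $S$ of $G$, which rearranges to $|S| \geq (n+2)/3$. First I would reduce to the case where $G$ is maximal outerplanar, using the fact that an STDS of a graph remains an STDS after adding edges (so $\gamma_{st}$ is non-increasing under spanning supergraphs) and that every outerplanar graph is a spanning subgraph of some maximal outerplanar graph on the same vertex set. The first substantive observation is then a degree consequence of Proposition~\ref{prop:stds_ch}: for every $u \notin S$, the defender $v$ of $u$ has $\epn(v,S) = \emptyset$, so $u$ cannot be a private external neighbor of $v$, forcing $|N(u) \cap S| \geq 2$.

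Next I would combine this with the forbidden-subgraph characterisation in Theorem~\ref{thm:charact}: since $G$ contains no $K_{2,3}$ subdivision, any two vertices of $G$ share at most two common neighbors, and any two adjacent vertices share at most one (otherwise the edge together with two length-two paths through common neighbors gives a $K_{2,3}$ subdivision). I would then form an auxiliary multigraph $M$ on the vertex set $S$ by including every edge of $G[S]$ and, for each $u \in V \setminus S$, selecting two of its $S$-neighbors and adding the corresponding multi-edge. Suppressing the chosen degree-two paths through the $V \setminus S$ vertices realises $M$ as a topological minor of $G$, so $M$ is outerplanar as a multigraph, and the common-neighbor bounds above limit its maximum multiplicity to $2$. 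The number of edges of $M$ equals $|E(G[S])| + |V \setminus S|$, and the total-domination requirement forces $|E(G[S])| \geq |S|/2$ since $G[S]$ has no isolated vertex; applying an Euler-type density inequality for outerplanar multigraphs with bounded multiplicity yields a first-pass linear inequality between $|V \setminus S|$ and $|S|$.

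The hardest part is sharpening this to the optimal constant, since a straightforward edge count gives only something of the form $|V \setminus S| \leq c|S| - O(1)$ with $c > 2$. The missing ingredient that I expect to close the gap is the defender-pendant interaction in Proposition~\ref{prop:stds_ch}: if $v \in S$ has a pendant $S$-neighbor $w$, meaning $w \in \ipn(v,S)$, then any $u$ defended by $v$ must lie in $N(v) \cap N(w)$, and since $vw \in E(G)$ the outerplanar structure forces this intersection to have size at most $1$. Hence such a $v$ defends at most one non-$S$ vertex. Partitioning $S$ into $V_1 = \{v : \ipn(v,S) \neq \emptyset\}$ and $V_0 = S \setminus V_1$ and counting defendees in each part, together with the observation that each pendant of $G[S]$ already contributes an edge to $M$, should supply the two extra slots needed for $|V \setminus S| \leq 2|S| - 2$. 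The tightness example $n = 3$ with $G = K_3$, where every STDS has size $2 = \lceil 5/3 \rceil$, shows the constant cannot be improved, so this final bookkeeping has to be carried out precisely.
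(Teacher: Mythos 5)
Your opening moves coincide with the paper's: from Proposition~\ref{prop:stds_ch} every $u \notin S$ has $|N(u) \cap S| \geq 2$ (so $S$ is a $2$-dominating set), and one then studies a multigraph on $S$ whose edges record length-two paths through the vertices of $T = V(G) \setminus S$, exploiting the $K_{2,3}$-subdivision obstruction. But the counting you build on top of this breaks down at two concrete points. First, the lemma you invoke twice --- that two \emph{adjacent} vertices of an outerplanar graph have at most one common neighbour --- is false: $K_{4}$ minus an edge (the $4$-vertex mop) has two adjacent vertices with two common neighbours, and the configuration consisting of an edge $uv$ plus two paths $u t_{1} v$ and $u t_{2} v$ is \emph{not} a subdivision of $K_{2,3}$, since such a subdivision requires all three internally disjoint paths to have length at least $2$. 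So your multigraph $M$ can have multiplicity $3$, and the ``defender--pendant'' step at the end (that $|N(v) \cap N(w)| \leq 1$ for an edge $vw$) rests on the same false claim. Second, and more fatally, the framework of bounding $|E(M)| = |E(G[S])| + |T|$ from above while bounding $|E(G[S])|$ from below by $|S|/2$ cannot reach $|T| \leq 2|S| - 2$. In the extremal configuration where $G[S]$ is a path $a_{1} a_{2} \cdots a_{x}$ and each edge $a_{i}a_{i+1}$ carries two common neighbours from $T$ (an outerplanar graph with $|T| = 2x-2$, attaining the bound), one has $|E(M)| = 3x-3$ while $|E(G[S])| = x-1$; any density bound $|E(M)| \leq f(x)$ that is valid must satisfy $f(x) \geq 3x-3$, and then $f(x) - x/2 \geq 5x/2 - 3 > 2x-2$ for $x \geq 3$. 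The slack between $|E(G[S])|$ and $x/2$ is exactly what the subtraction throws away, so no Euler-type inequality can close the gap within this scheme.

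The paper's proof shows how to repair this: it does \emph{not} put the edges of $G[S]$ into the auxiliary multigraph. It takes $B_{M}$ on vertex set $S$ with exactly one edge per vertex of $T$ (each $T$-vertex contributes one length-$2$ path between two chosen $S$-neighbours), notes that no pair of vertices carries three parallel edges, and then proves that every \emph{block} of $B_{M}$ has at most as many edges as vertices: a block with more edges than vertices contains either a doubled edge on a chordless cycle or a theta configuration, and since every edge of $B_{M}$ is a path of length $2$ in $G$, either case yields three internally disjoint paths of length at least $2$ between two vertices, i.e.\ a $K_{2,3}$ subdivision. Summing over the blocks of a connected component gives $|T| \leq |S| + (\text{number of blocks}) - 1 \leq 2|S| - 2$. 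Note that this holds for \emph{every} $2$-dominating set, so the secure-total structure beyond $2$-domination (your $\ipn$ bookkeeping, which you leave unexecuted) is not needed, and the preliminary reduction to maximal outerplanar graphs is also unnecessary. If you drop the $G[S]$ edges from $M$ and replace the global density bound by a block decomposition, your argument essentially becomes the paper's.
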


A set of vertices $S \subseteq V(G)$ is a \emph{2-dominating set} of
$G$ if $|S \cap N_{G}(u)| \geq 2$ for any $u \notin S$.
The \emph{2-domination number} of $G$, denoted by $\gamma_{2}(G)$, is
the minimum cardinality of a 2-dominating set of $G$.
In this section, we show that
$\gamma_{2}(G) \geq \lceil (n+2)/3 \rceil$ for an outerplanar
graph $G$ of $n$ vertices.
By Theorem~\ref{prop:stds_ch}, if $S$ is an STDS of $G$, we have
$\epn(v,S)=\emptyset$ for any $v \in S$, and thus $S$ is a
2-dominating set.
This means that $\gamma_{2}(G) \leq \gamma_{st}(G)$.
Hence, if $\gamma_{2}(G) \geq \lceil (n+2)/3 \rceil$, we obtain
Theorem~\ref{thm:lower_bound}.

Let $S$ be a 2-dominating set of $G$ and $T=V(G) \setminus S$,
and let $|S|=x$ and $|T| = y$.
Thus $x + y = n$.
If $y \leq 2x-2$, we obtain $x \geq \lceil (n+2)/3 \rceil$.

In order to show $y \leq 2x-2$, we suppose to the contrary that $y \geq
2x-1$.
Let $B(S)$ be a spanning subgraph of $G$ such that
\begin{itemize}
\item $B(S)$ is a bipartite graph with the bipartition $S \cup T$, and
\item each $v \in T$ is adjacent to exactly two vertices in $S$.
\end{itemize}
Since $S$ is a 2-dominating set, $v \notin S$ is adjacent to at
least two vertices in $S$, and hence there is such $B(S)$.

We assume that $B(S)$ is connected.
If $B(S)$ is disconnected, there is at least one connected component
$B'$ with the bipartition $S' \cup T'$, where $S' \subseteq S$ and $T'
\subseteq T$, that satisfies $|T'| \geq 2|S'|-1$.
Thus we can replace $B(S)$ with $B'$ in the following discussion.

Since $\deg_{B(S)}(v) = 2$ for $v \in T$, we define the multigraph
$B_{M}$ by replacing each vertex $v \in T$ and the two incident edges
with a single edge.
Hence, $B_{M}$ has a connected multigraph with the vertex set $S$, and
has $y$ edges that are corresponding to vertices of $T$.
Note that there are parallel edges between vertices $x$ and $y$ of
$B_{M}$ if and only if two or more vertices of $T$ are adjacent to $x$
and $y$ in $B(S)$.

\vspace{1em}
\noindent
\textbf{Claim 1.} $B_{M}$ has no pair of vertices such that there are
three or more parallel edges between them.
\begin{proof}[Proof of Claim~1.]
  If there are three parallel edges between $x$ and $y$ in $B_{M}$,
  then $B(S)$ has $K_{2,3}$ as a subgraph.
  Since $G$ is an outerplanar graph, it is impossible by
  Theorem~\ref{thm:charact}.
\end{proof}

\vspace{1em}
\noindent
\textbf{Claim 2.} $B_{M}$ has a block $F$ of $x'$ vertices and $y'$
edges such that $y' \geq x'+1$.
\begin{proof}[Proof of Claim~2.]
  If $B_{M}$ has no cut-vertex, then $B_{M}$ itself a block and we
  have $y \geq 2x-1 \geq x+1$.
  Assume that $B_{M}$ has $k \geq 2$ blocks $B_{1},B_{2},\dots,B_{k}$,
  and $B_{i}$ has $x_{i}$ vertices and $y_{i}$ edges for
  $i=1,2,\dots,k$.
  Suppose to the contrary that $y_{i} \leq x_{i}$ for all $i=1,2,\dots,k$.
  Thus $B_{M}$ has $y=y_{1}+y_{2}+\dots+y_{k}$ edges.
  Since $B_{M}$ has $k$ blocks, we obtain
  $y = y_{1}+y_{2}+ \dots +y_{k} \leq x_{1}+x_{2}+ \dots +x_{k} = x +
  k-1$.
  Since $y \geq 2x-1$, we obtain $x \leq k$.
  But a graph of $x \leq k$ vertices cannot have $k \geq 2$ blocks, so
  it is a contradiction.
\end{proof}

\vspace{1em}
Let $F$ be a block satisfying the condition of Claim~2.
If $x'=2$, then it has $y' \geq 3$ parallel edges between the two
vertices.
But it is impossible by Claim~1.
Hence $x' \geq 3$.
Since $F$ has no cut-vertex, $F$ contains a cycle.
Let $C$ be a shortest cycle of order at least 3 in $F$.
So $C$ has no chord.
If there is parallel edges between two consecutive vertices $v$ and
$w$ on $C$, there are three internally disjoint paths from $v$ to $w$,
two are the parallel edges and one is a path along the cycle $C$.
This implies $B(C)$ has a subdivision of $K_{2,3}$, and it is
impossible by Theorem~\ref{thm:charact}.
% If there is a chord in $C$, that is, there is an edge between vertices
% $v$ and $w$ such that $v$ and $w$ are not consecutive on $C$, then
% there are three internally disjoint paths from $v$ to $w$ in $F$, two
% are along the cycle $C$ and the one is edge $vw$.
% This also implies $B(C)$ has a subdivision of $K_{2,3}$, and it is
% impossible.
% Finally, we assume that $C$ has no parallel edges and no chords.
Then, we assume that $C$ has no parallel edges.
Since $y' \geq x'+1$, $F$ has at least one vertex $v$ not on $C$.
Since $F$ has no cut-vertex, there are two paths $P_{1}$ and $P_{2}$
such that
\begin{itemize}
\item $P_{1}$ is a path from $v$ to some vertex $v_{1}$ on $C$, and
  $V(P_{1}) \cap V(C) = \{v_{1}\}$,
\item $P_{2}$ is a path from $v$ to some vertex $v_{2}$ on $C$, and
  $V(P_{2}) \cap V(C) = \{v_{2}\}$, and
\item $v_{1} \neq v_{2}$ and $V(P_{1}) \cap V(P_{2}) = \{v\}$.
\end{itemize}
There are three internally disjoint paths from $v_{1}$ to $v_{2}$, two
are along the cycle $C$ and the one is obtained by concatenating
$P_{1}$ and $P_{2}$.
Again, we see that $B(C)$ has a subdivision of $K_{2,3}$, and it is
impossible.

From the above discussion, we show that $y \geq 2x-1$ is impossible.
Hence $y \leq 2x-2$ holds.
Since $n=x+y$, we obtain $n-x \leq 2x-2$ or $x \geq (n+2)/3$.
Therefore, any 2-dominating set $S$ has at least
$\lceil (n+2)/3 \rceil$ vertices, and the proof of
Theorem~\ref{thm:lower_bound} is complete.

\vspace{1em}
%%% 
We can construct infinite family of mops such that the equality of the
lower bound in Theorem~\ref{thm:lower_bound} holds.

For any $k \geq 1$, let $G_{k}$ be a graph that has the vertex set
$V(G_{k}) = \{v_{1},v_{2},\dots,v_{3k+1}\}$, and
\begin{displaymath}
  E(G_{k}) = \{v_{i}v_{i+1} \mid i = 2,3,\dots,3k\} \cup \{v_{1} v_{i} \mid i = 2,3,\dots,3k+1 \}.
\end{displaymath}
$G_{k}$ has a mop of $n=3k+1$ vertices, and $\deg_{G_{k}}(v_{1})=n-1$,
$\deg_{G_{k}}(v_{i})=2$ for $i \in \{2,3k+1\}$, and $\deg_{G_{k}}(v_{i})=3$
for $i \notin \{1,2,3k+1\}$.
An example of $G_{4}$ is in Fig.~\ref{fig:g_k}.

\begin{figure}[tb]
  \centering
  \includegraphics{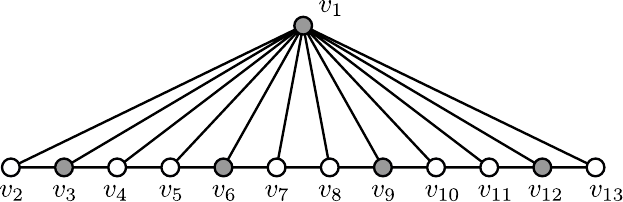}
  \caption{A outerplanar graph $G_{k}$ when $k=4$.
    The set of the gray vertices form an STDS of $G_{4}$.}
  \label{fig:g_k}
\end{figure}

Let $S=\{v_{1}\} \cup \{v_{3i} \mid i=1,2,\dots,k\}$.
Then $S$ is an STDS of $G_{k}$, and $|S|=\lceil (n+2)/3 \rceil=k+1$.
We show that $S$ is a minimum STDS of $G_{k}$.

Assume to the contrary that there is an STDS $S'$ of $k$ vertices.
For $j=1,2,\dots,k$, let $V_{j}=\{v_{3j-1},v_{3j},v_{3j+1}\}$.
Then $V(G)=V_{0} \cup V_{1} \cup \dots \cup V_{k}$, where
$V_{0}=\{v_{1}\}$.
Since $|S'|=k$, we have $V_{j} \cap S' = \emptyset$ for some $j$.
If $V_{j} \cap S' = \emptyset$ for some $j \geq 1$, then $v_{1} \in
S'$ since $S'$ is a total dominating set, but $v_{1}$ cannot totally
$S'$-defend $v_{3j}$ since $v_{3j} \in \epn(v_{1},S')$
(Theorem~\ref{prop:stds_ch}).
Hence $v_{1} \notin S'$.
Since $|S'|=k$ and $S'$ is a dominating set, $|S' \cap V_{j}|=1$ for
each $j=1,2,\dots,k$.
However, it implies that $S'$ cannot be a total dominating set.
Therefore, $\gamma_{st}(G_{k}) = k+1$ for any $k \geq 1$.

\section*{Acknowledgments}
\label{sec:acknowledgments}

This work was supported by JSPS KAKENHI Grant Number JP22K11898.


\end{document}